\documentclass[12pt]{amsart}
\usepackage{stmaryrd}
\usepackage{mathrsfs}
\usepackage{amsmath,amssymb}
\usepackage{amsfonts}
\usepackage{amsthm}
\usepackage{latexsym}
\usepackage{graphicx,color}
\def\R{\mathbb{R}}

\def\vv<#1>{\langle#1\rangle}

\def\1{\mathbf{1}}

\def\XXint#1#2{\setbox0=\hbox{$#1{#2}{\int}$}{#2}\kern-.5\wd0 }

\def\XXint#1#2#3{{\setbox0=\hbox{$#1{#2#3}{\int}$}
     \vcenter{\hbox{$#2#3$}}\kern-.5\wd0}}

\def\vv<#1>{{\left\langle#1\right\rangle}}

\def\wt{\widetilde}
\newtheorem{thm}{Theorem}[section]

\newtheorem{lem}{Lemma}[section]

\theoremstyle{definition}

\theoremstyle{remark}

\numberwithin{equation}{section}

\begin{document}
\title{An extension of Katsuda-Urakawa's Faber-Krahn inequality}

\author{Wankai He}
\address{Department of Mathematics, Shantou University, Shantou, Guangdong, 515063, China}
\email{18wkhe@stu.edu.cn}
\author{Chengjie Yu$^1$}
\address{Department of Mathematics, Shantou University, Shantou, Guangdong, 515063, China}
\email{cjyu@stu.edu.cn}
%\thanks{$^1$Research partially supported by GDNSF with contract no. 2026A1515012267. }
\thanks{$^1$Research partially supported by GDNSF with contract no. 2025A1515011144 and 2026A1515012267.}
\renewcommand{\subjclassname}{%
  \textup{2020} Mathematics Subject Classification}
\subjclass[2020]{Primary 05C35; Secondary 35R02}
\date{}
\keywords{Faber-Krahn inequality, Dirichlet eigenvalue, normalized combinatorial Laplacian }
\begin{abstract}
In this paper, motivated by our previous work \cite{HY}, we prove that the minimum of the first Dirichlet eigenvalues for the normalized combinatorial $p$-Laplacian  on connected finite graphs with boundary consisting of $n$ edges is only achieved by the tadpole graph $T_{n,3}$. This result extends the Faber-Krahn inequality of Katsuda-Urakawa \cite{KU} to normalized combinatorial $p$-Laplacians. Our argument is much simpler than that of Katsuda-Urakawa.
\end{abstract}
\maketitle
\markboth{He \& Yu}{Katsuda-Urakawa's Faber-Krahn inequality}
\section{Introduction}
In \cite{KU}, Katsuda and Urakawa obtained the following sharp Faber-Krahn inequality for the fist Dirichlet eigenvalues of the normalized combinatorial Laplacian on graphs with a fixed number of edges.
\begin{thm}[Katsuda-Urakawa \cite{KU}]\label{thm-KU}
Let $G$ be a connected graph with boundary that consists of $n\geq 4$ edges. Then,
$$\lambda_1(G)\geq \lambda_1(T_{n,3})$$
and the equality holds if and only if $G=T_{n,3}$.
\end{thm}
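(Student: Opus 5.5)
We work with the Rayleigh quotient characterization. For a graph $G$ with interior $\Omega$ and boundary $\delta\Omega$, we have
$$\lambda_1(G)=\min_{f|_{\delta\Omega}=0,\,f\not\equiv 0}\frac{\sum_{xy\in E}(f(x)-f(y))^2}{\sum_{x\in\Omega}\deg(x)f(x)^2}.$$
Since $f$ vanishes on $\delta\Omega$, the denominator equals $\sum_{xy\in E}(f(x)^2+f(y)^2)$. Hence
$$\lambda_1(G)=1-\max_f\frac{\sum_{xy\in E}2f(x)f(y)}{\sum_{xy\in E}(f(x)^2+f(y)^2)}.$$
Both sums run over the $n$ edges, so only the edge multiset of "value pairs" $\{f(x),f(y)\}$ matters. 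The first eigenfunction can be taken positive on the connected interior $\Omega$. Since there are finitely many graphs with $n$ edges, a minimizer $G$ exists. Fix a minimizer $G$ with its positive eigenfunction $f$. The goal is to show $G=T_{n,3}$ by a sequence of edge-relocation moves. Each move keeps the edge count $n$, keeps the graph connected with nonempty boundary, and does not decrease the numerator $\sum 2f(x)f(y)$ relative to the denominator. Any move that strictly improves the quotient contradicts minimality.

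\textbf{Key steps.}

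First, I show the boundary consists of a single edge. If two boundary edges $x b$ and $x' b'$ exist, each contributes $0$ to $\sum 2f(x)f(y)$ and $f(x)^2$, $f(x')^2$ to the denominator. Suppose we replace the boundary edge at the smaller-valued vertex, say $x'b'$, by an interior edge joining $x'$ to a suitable vertex. This adds a positive term to the numerator while changing the denominator only by a controlled amount. Alternatively, we reattach it as a pendant edge, with $f$ extended appropriately, which strictly increases the ratio. If no admissible non-adjacent partner exists, a complete-graph check handles the case.

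Second, I show that the interior is "path-like" along the level ordering of $f$. Order the interior vertices by $f$-value. Consider an edge $xy$ whose values are both small compared with the available top values. Relocating it so that it joins the two largest-valued vertices not already adjacent increases $2f(x)f(y)-\theta(f(x)^2+f(y)^2)$, where $\theta<1$ is the current max ratio. This forces the edges carrying the most weight to sit near the top of $f$. Combined with connectivity, this should force the structure of $T_{n,3}$: a single boundary edge, a monotone path of increasing values, and at the top a triangle. The triangle is the smallest closed configuration on the highest vertices allowed in a simple graph.

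Finally, once $G$ is shown to have the combinatorial type of $T_{n,3}$, the equality case follows. Any $G\neq T_{n,3}$ admits at least one strictly improving move, because $f>0$ on $\Omega$ makes the inequalities in each relocation strict.

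\textbf{Main obstacle.} The hard part will be the relocation step under the simple-graph constraint. A new edge may only be placed between non-adjacent vertices, and after moving an edge one must re-verify connectivity and that the old eigenfunction remains an admissible test function. In particular, $f$ must still vanish on the possibly new boundary. What I would try first is to always move the edge of smallest product $f(x)f(y)$ that is not a bridge. Its target is the top-valued vertex $v_{\max}$ and the highest-valued vertex not yet adjacent to it. I would argue that if no such target exists, then the top vertices already form a triangle and the remaining edges form a tree. A direct comparison on trees, where the minimizer is the path, then finishes the argument.
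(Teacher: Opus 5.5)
There is a genuine gap. None of your relocation moves is shown to lower the Rayleigh quotient, and the moves you do specify need not.

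\textbf{The improvement criterion.} Suppose that, with $f$ fixed, an edge $xy$ is replaced by $uv$. Because $N=\theta D$, the ratio $N/D$ increases iff $g(f(u),f(v))>g(f(x),f(y))$, where $g(a,b)=2ab-\theta(a^2+b^2)=2(1-\theta)ab-\theta(a-b)^2$.

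\textbf{Your second step.} This $g$ is far from monotone. For a minimizer, $1-\theta=\lambda_1(G)\le\lambda_1(T_{n,3})=O(n^{-2})$, so $g(a,b)<0$ once $|a-b|/\sqrt{ab}$ exceeds a constant multiple of $1/n$. Hence the edge $v_{\max}w$ you insert has negative $g$ unless $f(w)$ is within a relative $O(1/n)$ of $\max f$. You never compare this with the $g$-value of the deleted edge.

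\textbf{Your first step.} It has the same issue. Trading a pendant edge $x'b'$ for $x'y$ helps iff $f(y)<2f(x')/\theta$. Already for $G=P_{n+1}$, where $f(p_k)=\sin(k\pi/n)$ and $\theta=\cos(\pi/n)$, the choice $y=p_3$ (which turns $P_{n+1}$ into $T_{n,3}$) fails this for every $n\ge 7$. Which $y$ is ``suitable'', and why one exists, is exactly what has to be proved.

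\textbf{Admissibility.} Deleting an edge with an endpoint of degree $2$ creates a pendant vertex where $f>0$, so $f$ stops being admissible. In $P_{n+1}$ and in $T_{n,i}$ with $i\ge 4$, every edge is a bridge or has such an endpoint. So admissible moves are scarce precisely where the decisive comparisons live.

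\textbf{The conclusion.} The implication ``no improving move $\Rightarrow G=T_{n,3}$'' is only asserted. The triangle-plus-tree dichotomy is not derived. ``The path is optimal among trees'' is itself an unproved Faber--Krahn statement, and it would still leave $P_{n+1}$ versus $T_{n,3}$ open; the paper needs Lemma \ref{lem-comparison-path} for that.

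The missing idea is a test function that permits one global comparison. This is how the paper argues, with no minimizer and no iteration.
\begin{itemize}
\item Let $m$ be a maximum point of the positive eigenfunction $f$ of an arbitrary $G$, and let $v_n\sim\cdots\sim v_i=m$ be a shortest path from a boundary vertex.
\item If $i\ge 3$, label $T_{n,3}$ as $u_n\sim\cdots\sim u_3\sim u_2\sim u_1\sim u_3$. Set $\widetilde f(u_k)=f(v_k)$ for $k\ge i$ and $\widetilde f(u_k)=f(m)$ for $k<i$.
\item The energy of $\widetilde f$ equals the energy of $f$ along the path, hence is at most $\|df\|_{2,G}^2$.
\item The handshake count \eqref{eq-degree} bounds $\sum_{k=i+1}^{n-1}(\deg v_k-2)+\sum_{x\in\Omega(G)\setminus\{v_{i+1},\dots,v_{n-1}\}}\deg x$ by $2i+1$. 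This is exactly the total degree of $u_1,\dots,u_i$ in $T_{n,3}$, so $f\le f(m)$ yields $\|f\|_{2,G}^2\le\|\widetilde f\|_{2,T_{n,3}}^2$.
\item Strictness follows from Lemma \ref{lem-max-in-head}, since $\widetilde f$ is maximal at $u_i$, which lies on the tail.
\item The cases $i\le 2$ leave only $T_{n,3}$, $T_{n,4}$, $P_{n+1}$, and $P_n$ with one pendant edge attached. Lemmas \ref{lem-comparison-tadpole} and \ref{lem-comparison-path} settle these.
\end{itemize}
Flattening $\widetilde f$ on $u_1,\dots,u_i$ is precisely what neutralizes the sign problem of $g$: every edge there joins equal values.
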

Here the pendant vertices of $G$ are considered as the boundary vertices of $G$, $T_{n,3}$ means the tadpole graph on $n$-vertices with the head a cycle of length $3$ (see the next section for details), and $\lambda_1(G)$ is the first Dirichlet eigenvalue of the normalized combinatorial Laplacian on $G$:
$$\Delta_Gf(x)=\frac{1}{\deg(x)}\sum_{y\sim x}(f(x)-f(y)).$$

The proof of Theorem \ref{thm-KU} in \cite{KU} is rather complicated using three kinds of surgeries to reduce the first Dirichlet eigenvalue of a graph with boundary. In this paper, motivated by our previous work \cite{HY}, we extend  Theorem \ref{thm-KU} to normalized combinatorial $p$-Laplacian:
$$\Delta_{p,G} f(x)=\frac{1}{\deg(x)}\sum_{y\sim x}|f(x)-f(y)|^{p-2}(f(x)-f(y)).$$
Our argument is much simpler than that of Katsuda-Urakawa \cite{KU} by just using a simple surgery on the graph.

\begin{thm}\label{thm-main}
Let $G$ be a connected graph with boundary that consists of $n\geq 4$ edges and $p>1$. Then,
$$\lambda_{1,p}(G)\geq \lambda_{1,p}(T_{n,3})$$
and the equality holds if and only if $G=T_{n,3}$.
\end{thm}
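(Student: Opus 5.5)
The plan is to compare Rayleigh quotients directly and to construct, from a first eigenfunction of $G$, a test function on $T_{n,3}$ whose quotient is no larger. Since $f=0$ on the boundary (the pendant vertices), we have $\sum_{x}\deg(x)|f(x)|^p=\sum_{xy\in E}(|f(x)|^p+|f(y)|^p)$. Hence for the Dirichlet problem
$$\lambda_{1,p}(G)=\min_{f|_{\partial G}=0,\ f\neq 0}\frac{\sum_{xy\in E(G)}|f(x)-f(y)|^p}{\sum_{xy\in E(G)}\left(|f(x)|^p+|f(y)|^p\right)},$$
and both numerator and denominator are sums over the $n$ edges. I would first record the standard facts for the first Dirichlet eigenfunction $f$ of $\Delta_{p,G}$: it may be taken nonnegative, it is strictly positive on interior vertices by connectedness (strong maximum principle for the $p$-Laplacian), and $\lambda_{1,p}$ is simple. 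I would also compute or characterize $\lambda_{1,p}(T_{n,3})$. Its eigenfunction is symmetric on the two free triangle vertices and monotone along the tail, which I would establish via uniqueness and symmetrization.

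The core step is a single surgery that makes the problem edge-local. Take the eigenfunction $f\ge 0$ of $G$ and a vertex $x_0$ where $f$ attains its maximum $M$. Choose a shortest path $P$ in $G$ from a boundary vertex to $x_0$. Along $P$ the values of $f$ can be replaced by their monotone rearrangement without increasing the numerator. Every edge not on $P$ contributes a term $|f(x)-f(y)|^p$ to the numerator and a term $f(x)^p+f(y)^p\le 2M^p$ to the denominator. I would then rebuild the graph as follows. Keep an increasing path from $0$ to the top value, then use the remaining edges to form the tadpole. The extra edges become path edges carrying values close to $M$, or triangle edges with both endpoints equal to $M$, which contribute $0$ to the numerator and the maximal $2M^p$ to the denominator. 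The intended inequality is an edge-by-edge (or level-by-level) comparison showing that each edge of $G$ is matched with an edge of $T_{n,3}$ whose numerator is no larger and whose denominator is no smaller. The number of edges being exactly $n$ is what makes this matching possible, and the triangle is forced because a simple graph needs at least three edges to close a cycle at the top level. Concretely, I would parametrize $g$ on $T_{n,3}$ by the sorted list of edge ``levels'' of $f$, and use convexity of $t\mapsto t^p$ to show that concentrating all zero-gradient edges at the top is optimal.

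The main obstacle is exactly this comparison. In $G$ an edge may join two vertices with equal, non-maximal values, contributing $0$ to the numerator, and these must be dominated by edges of $T_{n,3}$, where the only zero-gradient edges are the three triangle edges. I would first try to show, using the eigen-equation $\Delta_{p,G}f=\lambda f$ at a maximum point, that such ``flat'' edges can be pushed up to level $M$, increasing the denominator, at a numerator cost absorbed by a one-step telescoping estimate along the path.

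For the equality case, I would trace where the inequalities are strict. Strict positivity of $f$ at interior vertices, together with simplicity of the eigenvalue, should force every step of the surgery to be trivial, namely that $G$ already is a path with a triangle attached at the top. The hypothesis $n\ge 4$ is used so that the tail is nonempty and $T_{n,3}$ is defined with a genuine boundary vertex.
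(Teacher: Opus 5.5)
\textbf{Verdict: genuine gap.} Your surgery only works when at least three edges lie off the path, and you never treat the case where it fails; that case contains $T_{n,3}$ itself and is where the rigidity is decided.

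\textbf{The surgery itself.} It is the right one, and once the denominator is written as an edge sum it closes immediately. You do not need monotone rearrangement, convexity or telescoping. Write the shortest path as $v_n\sim\cdots\sim v_i=x_0$, so exactly $i$ edges of $G$ lie off it. On $T_{n,3}$ set $g(u_k)=f(v_k)$ for $k\ge i$ and $g\equiv M$ on $u_1,\dots,u_i$.
\begin{itemize}
\item The path edges match exactly.
\item The $i$ edges of $T_{n,3}$ spanned by $u_1,\dots,u_i$ are the tail segment $u_i\sim\cdots\sim u_3$ plus the triangle. Each contributes $0$ to the numerator and $2M^p$ to the denominator, so they dominate the off-path edges of $G$ whatever their levels.
\end{itemize}
The ``flat edges at non-maximal levels'' obstacle you single out does not exist. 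It comes from your premise that the triangle edges are the only zero-gradient edges available in $T_{n,3}$; the upper part of the tail can be put at level $M$ as well.

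Strictness is also automatic here. Since $g$ is constant on the closed neighbourhood of $u_1$, we get $\Delta_{p}g(u_1)=0\neq\lambda_{1,p}(T_{n,3})M^{p-1}$. So $g$ is not a minimizer, and $R_{p,T_{n,3}}[g]>\lambda_{1,p}(T_{n,3})$.

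\textbf{The gap.} The matching requires $i\ge 3$. You never treat the case in which at most two edges lie off $P$, i.e.\ the maximum point is at distance $n-1$ or $n-2$ from the boundary. This case is not vacuous:
\begin{itemize}
\item $T_{n,3}$ itself lands there, since its eigenfunction peaks at the two free triangle vertices, at distance $n-2$ from the end vertex;
\item $T_{n,4}$ lands there, with its peak at the vertex opposite the neck;
\item $P_5$ lands there when $n=4$.
\end{itemize}
With at most two spare edges no triangle can be built at level $M$, and no edge-by-edge matching yields the inequality.

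The paper handles this case by enumerating the possible graphs: $T_{n,3}$, $T_{n,4}$, $P_{n+1}$, and $P_n$ with one pendant edge. It disposes of them with genuine spectral comparisons,
$$\lambda_{1,p}(T_{n,4})>\lambda_{1,p}(T_{n,3}),\qquad \lambda_{1,p}(P_n)>\lambda_{1,p}(P_{n+1})>\lambda_{1,p}(T_{n,3}).$$
It also uses one further observation. Deleting a pendant edge $v_0v_j$ lowers both $\|df\|_{p,G}^p$ and $\|f\|_{p,G}^p$ by $f^p(v_j)$. Because $\lambda_{1,p}\le 1$, this does not increase the Rayleigh quotient.

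Your proposal contains none of these comparisons, and they are exactly where the rigidity statement is decided. So your equality plan, tracing the surgery until $G$ ``already is'' a tadpole, is misdirected. For $i\ge 3$ the inequality is always strict, and $G=T_{n,3}$ never passes through the surgery at all.
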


When $p=1$, the Faber-Krahn inequality in Theorem \ref{thm-main} still holds by that 
$$\lambda_{1,1}(G)=\lim_{p\to 1^{+1}}\lambda_{1,p}(G).\ \mbox{(see \cite{Ge} for example.)}$$
However, the rigidity part of the Faber-Krahn inequality does not hold. In fact, by noting that
$$\lambda_{1,1}(G)=h_D(G)$$
where $h_D(G)$ is the Dirichlet Cheeger constant (see \eqref{eq-Cheeger} for definition), one is not hard to see that 
$$\lambda_{1,1}(G)\geq\frac{1}{2n-1}$$
with equality if and only if $G$ has only one pendant vertex.

The classical Faber-Krahn inequality (see \cite[P. 87]{Ch}) says that 
$$\lambda_1(\Omega)\geq\lambda_1(B)$$
where $\Omega$ is any smooth bounded Euclidean domains and $B$ is a ball of the same volume as $\Omega$, and the equality holds if and only if $\Omega$ is the translation of the ball $B$. So, Theorem \ref{thm-KU} is a discrete version of Faber-Krahn inequality.

Discrete analogues of the classical Faber-Krahn inequality were first considered in the pioneer work \cite{Fr} of Friedman. In \cite{Fr}, Friedman formulated the problem of finding the Faber-Krahn inequality for domains in a homogeneous tree with fixed total length and conjectured that the minimum is achieved by a geodesic ball in the homogeneous tree. Later, Pruss \cite{Pr} disproved Friedman's conjecture, and Leydold \cite{Le97,Le02} completely solved Friedman's problem.  In \cite{Fr2}, Friedman established Faber-Krahn inequalities for eigenvalues of the combinatorial Laplacian on graphs with a fixed number of vertices, and the second named author and Yingtao Yu \cite{YY} extended the result to Steklov eigenvalues. The general problem of finding graphs satisfying the so called Faber-Krahn property in certain class of graphs was introduced by Bıyıkoğlu and Leydold \cite{BL}. In \cite{BL,ZZ12,ZZ13,ZZZ,WH,LLY}, the authors solved the extremum problem formulated in \cite{BL} for various  classes of graphs.
\section{Proof of the main result}
We first introduce some notations and basic facts.

For a nontrivial connected finite graph $G$, we denote the collection of pendant vertices of $G$ as $B(G)$ which is viewed as the boundary of $G$. The set
$$\Omega(G):=V(G)\setminus B(G)$$
is viewed as the interior of $G$. For $p>1$, the normalized combinatorial $p$-Laplacian operator on $G$ is defined as
$$\Delta_{p,G} f(x)=\frac{1}{\deg(x)}\sum_{y\sim x}|f(x)-f(y)|^{p-2}(f(x)-f(y)),\ \forall x\in V(G),$$
where $f\in \R^{V(G)}$. A real number $\lambda$ is called a $p$-Dirichlet eigenvalue of $G$ if the following Dirichlet boundary value problem:
$$\left\{\begin{array}{ll}\Delta_{p,G} f(x)=\lambda |f|^{p-2}f(x)&x\in \Omega(G)\\
f(x)=0&x\in B(G)
\end{array}\right.$$
has a nonzero solution $f$, and $f$ is called a $p$-Dirichlet eigenfunction of $G$. The smallest $p$-Dirichlet eigenvalue of $G$ is denoted as $\lambda_{1,p}(G)$ which is called the first $p$-Dirichlet eigenvalue of $G$, and the corresponding eigenfunction is called a first $p$-Dirichlet eigenfunction of $G$. The first $p$-Dirichlet eigenvalue can be characterized by the minimum of $p$-Rayleigh quotient:
\begin{equation}\label{eq-min}
\lambda_1(G)=\min_{f\in C_B(G)\setminus\{0\}}R_{p,G}[f],
\end{equation}
and the minimum is only achieved by first Dirichlet eigenfunctions. Here
$$C_B(G)=\left\{f\in \R^{V(G)}\ \Big|\ f|_B\equiv 0\right\},$$
and
$$R_{p,G}[f]=\frac{\|df\|_{p,G}^p}{\|f\|_{p,G}^p}$$
with
$$\|df\|_{p,G}^p=\sum_{\{x,y\}\in E(G)}|f(x)-f(y)|^p$$
and
$$\|f\|_{p,G}^p=\sum_{x\in V(G)}|f|^p(x)\deg(x)=\sum_{x\in\Omega(G)}|f|^p(x)\deg(x)$$
since $f\in C_B(G).$

If $B(G)\neq \emptyset$ and $p>1$, as shown in \cite{HW},
the first $p$-Dirichlet eigenvalue $\lambda_{1,p}(G)$ is positive and the first $p$-Dirichlet eigenfunction $f$ is nonzero and does not change signs in $\Omega(G)$. Without loss of generality, we can assume that $f$ is positive on $\Omega(G)$. In this case, we call $f$ a positive first $p$-Dirichlet eigenfunction of $G$. Moreover, as shown in \cite{HW}, $\lambda_1(G)$ is of multiplicity one in the sense that the first $p$-Dirichlet eigenfunction is unique up to a constant multiple.

When $p=1$, the definition of the normalized combinatorial $1$-Laplacian for a graph is subtle, see \cite{Ch} for example. However, the first $1$-Dirichlet eigenvalue $\lambda_{1,1}(G)$ can be also characterized by Rayleigh quotient in \eqref{eq-min} with $p=1$. In fact, it is well-known (see \cite{Gr,HW} for example) that 
\begin{equation}\label{eq-lambda1-Cheeger}
\lambda_{1,1}(G)=h_D(G)
\end{equation}
where 
\begin{equation}\label{eq-Cheeger}
h_D(G)=\inf_{\emptyset\neq U\subset\Omega(G)}\frac{|E(U,U^c)|}{\sum_{x\in U}\deg(x)}.
\end{equation}
By setting $f=\1_U$ with $U$ a nonempty subset in $\Omega(G)$ in \eqref{eq-min}, one has
\begin{equation*}
\lambda_{1,p}(G)\leq h_D(G).
\end{equation*} 
Moreover, by setting $U=\{x\}$ where $x$ is an interior vertex of $G$, one has
\begin{equation*}
h_D(G)\leq 1
\end{equation*}
and
\begin{equation}\label{eq-lambda1}
\lambda_{1,p}(G)\leq 1.
\end{equation} 

Next, we introduce the notion of tadpole graph. For $n>i\geq 3$, we denote the tadpole graph on $n$ vertices with the head a cycle of length $i$ as $T_{n,i}$ which can represented as
\begin{equation}\label{eq-tadpole-graph}
T_{n,i}: t_n\sim t_{n-1}\sim \cdots\sim t_i\sim t_{i-1}\sim \cdots \sim t_2\sim t_1\sim t_i.
\end{equation}
The path
$$P:t_n\sim t_{n-1}\sim \cdots\sim t_i$$
is called the tail of $T_{n,i}.$ The cycle
$$C:t_i\sim t_{i-1}\sim \cdots \sim t_2\sim t_1\sim t_i$$
is called the head of $T_{n,i}$. The vertices $t_i$ and $t_n $ are called the neck vertex and end vertex of $T_{n,i}$ respectively.  

Tadpole graphs have the following useful spectral properties with their proofs are the same as those in \cite{HY}. See also \cite{KU}.
\begin{lem}\label{lem-max-in-head}
 Let $n>i\geq 3$, $p>1$ and $f$ be a positive first $p$-Dirichlet eigenfunction of $T_{n,i}$. Then, $f$ does not achieve its maximum on the tail of $T_{n,i}$.
\end{lem}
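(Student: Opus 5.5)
The plan is a variational truncation argument. I do not try to prove monotonicity along the tail directly: the equation at a degree-$2$ vertex only gives a concavity-type condition, not monotonicity. Instead I flatten $f$ beyond a supposed tail maximum and then use the minimizing property \eqref{eq-min} together with the uniqueness of the first $p$-Dirichlet eigenfunction from \cite{HW}.

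Suppose, for contradiction, that $f$ attains its maximum $M=\max f>0$ at a tail vertex $t_k$ with $i\le k\le n$. Since $f(t_n)=0<M$, we have $k<n$. Let $A=\{t_{k-1},\dots,t_i\}\cup\{t_1,\dots,t_{i-1}\}$ be the vertices on the far side of $t_k$ from the end vertex; when $k=i$, $A$ is just the non-neck head vertices. Define $g\in C_B(T_{n,i})$ by
\[
g=f \text{ on } \{t_n,\dots,t_k\},\qquad g\equiv M \text{ on } A .
\]
This $g$ vanishes at the unique boundary vertex $t_n$. Every edge with at least one endpoint in $A$ has both endpoints in $A\cup\{t_k\}$, where $g\equiv M$. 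All other edges are untouched. Hence $\|dg\|_{p}^p\le\|df\|_{p}^p$. Since $M\ge f$ pointwise and $g=f$ off $A$, we also get $\|g\|_p^p\ge\|f\|_p^p$. Therefore
\[
R_{p}[g]\le R_p[f]=\lambda_{1,p}(T_{n,i}),
\]
so $g$ attains the minimum in \eqref{eq-min} and is a first $p$-Dirichlet eigenfunction.

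By the multiplicity-one result from \cite{HW}, $g=cf$ for some constant $c$. Both functions agree and are positive at $t_{n-1}$, so $c=1$ and $f=g$. In particular $f\equiv M$ on the head cycle. Now evaluate the eigen-equation at the head vertex $t_1$, whose two neighbours $t_2$ and $t_i$ both lie in $A\cup\{t_k\}$. There $\Delta_{p}f(t_1)=0$, while $\lambda_{1,p}M^{p-1}>0$ because $\lambda_{1,p}>0$. This is a contradiction, so $f$ does not attain its maximum on the tail.

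The only delicate point is the equality step. We need that attaining the minimum of the Rayleigh quotient forces $g$ to be an eigenfunction, and that the eigenfunction is unique up to scaling. Both facts are quoted from \cite{HW} earlier in the paper. The rest is bookkeeping about which edges touch $A$, which is immediate from the explicit description \eqref{eq-tadpole-graph} of $T_{n,i}$.
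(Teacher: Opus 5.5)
Your proof is correct. Note that the paper gives no proof of this lemma; it only says the argument is the same as in \cite{HY}, so there is no in-text proof to compare against. Your flattening $g\equiv M$ beyond $t_k$ is the same surgery the paper uses in the proof of Theorem~\ref{thm-main}, where $\wt f$ is set equal to the maximum value $f(v_i)$ on the whole head. The contradiction then comes from the fact that an eigenfunction with $\lambda_{1,p}>0$ cannot be constant on an interior vertex together with all its neighbours.

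Two remarks. First, the appeal to simplicity is unnecessary. Once $R_p[g]=\lambda_{1,p}$, the function $g$ is itself a first eigenfunction, and $g\equiv M$ on $\{t_1,t_2,t_i\}$, so the eigen-equation for $g$ at $t_1$ already fails.

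Second, a shorter route avoids the variational characterization altogether. Let $S=\{t_1,\dots,t_{k-1}\}$ and let $x\in S$ minimize $f$ on $S$. Every neighbour $y$ of $x$ lies in $S\cup\{t_k\}$, and $f(t_k)=M\ge f(x)$, so each term $|f(x)-f(y)|^{p-2}(f(x)-f(y))$ is $\le 0$. Hence $\Delta_{p}f(x)\le 0<\lambda_{1,p}f(x)^{p-1}$, a contradiction. This discrete minimum principle uses only $\lambda_{1,p}>0$ and $f>0$.

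Run for every $k\in\{i,\dots,n-1\}$ without assuming $t_k$ is a maximum point, the same argument gives $f(t_k)<\min_{j<k}f(t_j)$. Therefore
$f(t_n)<f(t_{n-1})<\cdots<f(t_i)<\min_{j<i}f(t_j)$,
which is strictly stronger than the lemma. So your opening remark that the equation yields no monotonicity undersells it: the local no-interior-minimum property, combined with the fact that everything beyond $t_k$ is attached only through $t_k$, does give strict monotonicity along the tail.

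What your variational version buys is that it makes explicit the Rayleigh-quotient comparison that also drives the main proof.
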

\begin{lem}\label{lem-comparison-tadpole}
For $n\geq 5$ and $p>1$,
$$\lambda_{1,p}(T_{n,4})>\lambda_{1,p}(T_{n,3}).$$
\end{lem}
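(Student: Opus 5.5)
The plan is a single edge surgery turning $T_{n,4}$ into $T_{n,3}$, using the positive first $p$-Dirichlet eigenfunction of $T_{n,4}$ as a test function in \eqref{eq-min} on $T_{n,3}$.

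Label $T_{n,4}$ as in \eqref{eq-tadpole-graph}: tail $t_n\sim\cdots\sim t_4$ and head $t_4\sim t_3\sim t_2\sim t_1\sim t_4$. Let $f$ be its positive first $p$-Dirichlet eigenfunction and $\lambda=\lambda_{1,p}(T_{n,4})>0$.

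\emph{Step 1 (symmetry).} The map exchanging $t_1$ and $t_3$ and fixing all other vertices is an automorphism of $T_{n,4}$ that preserves the boundary $\{t_n\}$. So $f$ composed with it is again a positive first eigenfunction. By the simplicity result of \cite{HW}, and since both functions have the same $p$-norm, the two coincide. Write $f(t_1)=f(t_3)=a$, $f(t_2)=b$, $f(t_4)=c$.

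\emph{Step 2 (ordering on the head).} The eigen-equation at $t_2$ reads
$$|b-a|^{p-2}(b-a)=\lambda b^{p-1}>0,$$
so $b>a$. The equation at $t_3$ reads
$$|a-b|^{p-2}(a-b)+|a-c|^{p-2}(a-c)=2\lambda a^{p-1}>0.$$
Its first term is negative, so $a>c$, that is $f(t_3)>f(t_4)$.

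\emph{Step 3 (surgery).} Delete the edge $t_1t_4$ and add the edge $t_1t_3$. In the new graph:
\begin{itemize}
\item $t_3$ is adjacent to $t_4,t_2,t_1$, and $t_3\sim t_2\sim t_1\sim t_3$ is a triangle;
\item $t_4$ has degree $2$, with neighbours $t_5$ and $t_3$; here $n\ge 5$ guarantees that $t_5$ exists and is a tail vertex;
\item $t_n$ is still the only pendant vertex.
\end{itemize}
Hence the new graph is $T_{n,3}$, with neck $t_3$ and tail $t_n\sim\cdots\sim t_4\sim t_3$. Regard $f$ as a function on $T_{n,3}$; it still vanishes on the boundary.

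\emph{Step 4 (Rayleigh quotient).} For the numerator, the only change is that the term $|a-c|^p$ is replaced by $|f(t_1)-f(t_3)|^p=0$. So $\|df\|^p_{p}$ does not increase. For the denominator, only two degrees change: $\deg t_4$ drops from $3$ to $2$ and $\deg t_3$ rises from $2$ to $3$. So $\|f\|_p^p$ changes by $a^p-c^p>0$, using Step 2. Therefore
$$\lambda_{1,p}(T_{n,3})\le R_{p,T_{n,3}}[f]<R_{p,T_{n,4}}[f]=\lambda_{1,p}(T_{n,4}).$$
The strict middle inequality holds because the numerator is positive (it equals $\lambda\|f\|_p^p>0$) and the denominator strictly increases.

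The only delicate point is Step 2, the sign information $f(t_3)>f(t_4)$. It depends on the symmetry $f(t_1)=f(t_3)$ from simplicity of the first eigenvalue, which I take from \cite{HW}. Lemma \ref{lem-max-in-head} is not needed for this argument.
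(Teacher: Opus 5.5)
Your proof is correct and complete. The paper gives no proof of this lemma to compare against: it only says the argument is the same as in \cite{HY} (see also \cite{KU}).

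Your route is self-contained. It uses the following ingredients:
\begin{itemize}
\item positivity and simplicity of the first eigenfunction from \cite{HW};
\item the eigen-equations at the two head vertices $t_2$ and $t_3$;
\item the single edge rotation $t_1t_4\mapsto t_1t_3$, which turns $T_{n,4}$ into $T_{n,3}$ with exactly the labelling of \eqref{eq-tadpole-graph}.
\end{itemize}
This is the same kind of one-surgery Rayleigh-quotient comparison the paper uses for Theorem \ref{thm-main}.

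You also handled the one point specific to the normalized setting. Rotating the edge moves one unit of degree from $t_4$ to $t_3$, so the weighted denominator changes by $f(t_3)^p-f(t_4)^p$. Step 2, via the symmetry $f(t_1)=f(t_3)$, shows this is positive. In an unnormalized setting the denominator would not change at all.

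Since $a>c$ also makes the numerator drop strictly by $|a-c|^p$, strictness comes for free. So you need no argument that the test function fails to be a first eigenfunction, unlike the appeal to Lemma \ref{lem-max-in-head} in the proof of Theorem \ref{thm-main}.
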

\begin{lem}\label{lem-comparison-path}
For any $n\geq 4$ and $p>1$,
$$\lambda_{1,p}(P_n)>\lambda_{1,p}(P_{n+1})>\lambda_{1,p}(T_{n,3}).$$
Here $P_n$ is the path graph on $n$ vertices.
\end{lem}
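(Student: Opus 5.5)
Both inequalities come from the Rayleigh quotient characterization \eqref{eq-min}. In each case I take the positive first $p$-Dirichlet eigenfunction on the graph with the larger first eigenvalue, perform a small surgery to get the target graph, and build a test function that strictly lowers the Rayleigh quotient.

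\emph{Step 1: $\lambda_{1,p}(P_n)>\lambda_{1,p}(P_{n+1})$.} Let $f$ be a positive first $p$-Dirichlet eigenfunction of $P_n: v_1\sim\cdots\sim v_n$. Let $v_k$ be a vertex where $f$ attains its maximum; it is interior, so $2\le k\le n-1$. Form $P_{n+1}$ by inserting a new vertex $w$ between $v_k$ and $v_{k+1}$. Define $g=f$ on the old vertices and $g(w)=f(v_k)$. Every interior vertex still has degree $2$, and the boundary is unchanged. The new edge $v_kw$ contributes $0$, and the edge $wv_{k+1}$ contributes exactly what $v_kv_{k+1}$ did, so $\|dg\|^p_{p,P_{n+1}}=\|df\|^p_{p,P_n}$. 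On the other hand $\|g\|^p_{p,P_{n+1}}=\|f\|^p_{p,P_n}+2f(v_k)^p$. Hence $R_{p,P_{n+1}}[g]<\lambda_{1,p}(P_n)$.

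\emph{Step 2: $\lambda_{1,p}(P_{n+1})>\lambda_{1,p}(T_{n,3})$.} Let $g$ be the positive first eigenfunction of $P_{n+1}: u_0\sim u_1\sim\cdots\sim u_n$, and write $g_i=g(u_i)$. I first need two qualitative facts.
\begin{itemize}
\item[(a)] $g$ is symmetric. This follows from uniqueness up to scaling (multiplicity one, \cite{HW}).
\item[(b)] $g$ is unimodal: nondecreasing up to the middle and nonincreasing after it. At each interior vertex $\Delta_{p}g=\lambda g^{p-1}>0$, i.e. $\varphi(g_i-g_{i-1})+\varphi(g_i-g_{i+1})>0$ with $\varphi(t)=|t|^{p-2}t$. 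So $g$ has no interior local minimum, and along the increasing part the increments strictly decrease.
\end{itemize}
The surgery deletes the boundary vertex $u_0$ with its edge and adds the edge $u_1u_3$. This produces the tadpole $T_{n,3}$ with head $u_1u_2u_3$, neck $u_3$ and end vertex $u_n$. The test function is $h(u_1)=g_2$ and $h(u_i)=g_i$ for $i\ge2$. Comparing the energies edge by edge:
\begin{itemize}
\item the edges $u_0u_1$ (energy $g_1^p$) and $u_1u_2$ (energy $(g_2-g_1)^p$) disappear;
\item the edge $u_1u_2$ now has energy $0$;
\item the new edge $u_1u_3$ has energy $|g_3-g_2|^p$.
\end{itemize}
Hence
$$\|dh\|^p-\|dg\|^p=|g_3-g_2|^p-(g_2-g_1)^p-g_1^p .$$
The denominator can only grow: $h(u_1)=g_2\ge g_1$ by (b), and $\deg u_3$ rises from $2$ to $3$. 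So it suffices to prove $|g_3-g_2|\le g_2-g_1$. Then the numerator drops by at least $g_1^p>0$, giving $R_{p,T_{n,3}}[h]<\lambda_{1,p}(P_{n+1})$.

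\emph{Main obstacle.} The inequality $|g_3-g_2|\le g_2-g_1$ is the step needing care; I would prove it from the sign of $\Delta_p g(u_2)$ together with (a) and (b), in two cases.
\begin{itemize}
\item If $g_3\ge g_2$, the relation $\varphi(g_2-g_1)>\varphi(g_3-g_2)$ gives $g_3-g_2<g_2-g_1$ directly.
\item If $g_3<g_2$, then $u_2$ is at or past the maximum. Since $n\ge4$, the path $P_{n+1}$ has at least $5$ vertices, so the middle index is at least $2$. Symmetry and unimodality then force $u_2$ to be the midpoint or its neighbour. One checks $g_3\ge g_1$ in each case:
\begin{itemize}
\item for $n=4$, $g_3=g_1$;
\item for $n=5$, $g_3=g_2$, which is excluded in this case;
\item for larger $n$, $g_3\ge g_2$, so this case does not occur.
\end{itemize}
Hence $g_2-g_3\le g_2-g_1$.
\end{itemize}
If this case analysis turned out to be fragile, my fallback would be to place the triangle at whichever end makes $u_3$ lie on the ascending side, which is always possible since $n\ge4$.
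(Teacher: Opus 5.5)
Your proof is correct. Step 1 works as written; the choice of a maximum point is not even needed, since for any interior $v_k$ the energy is unchanged and the weighted norm grows by $2f(v_k)^p$. In Step 2 the surgery does produce $T_{n,3}$ with boundary $\{u_n\}$. The energy and norm bookkeeping is right, including $\|h\|^p_{p,T_{n,3}}-\|g\|^p_{p,P_{n+1}}=2(g_2^p-g_1^p)+g_3^p$. Your case analysis for $|g_3-g_2|\le g_2-g_1$ also checks out for $n=4$, $n=5$ and $n\ge 6$.

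There is nothing in the text to compare against: the paper gives no proof of this lemma and only says the proofs are the same as in \cite{HY} (see also \cite{KU}). Your argument is a self-contained test-function surgery in the same spirit as the paper's proof of Theorem~\ref{thm-main}, which also flattens the eigenfunction and compares Rayleigh quotients via \eqref{eq-min}. It uses only the multiplicity-one fact from \cite{HW} that the paper already quotes.

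You can shorten the step you flagged as the main obstacle. Write $d_i=g_i-g_{i-1}$. The eigenvalue equation at every interior vertex $u_i$ reads $\varphi(d_i)-\varphi(d_{i+1})=2\lambda g_i^{p-1}>0$, so $d_1>d_2>\cdots>d_n$. Symmetry gives $d_{n+1-i}=-d_i$. For $n\ge 4$ this yields $d_2>d_{n-1}=-d_2$ and $d_2\ge d_{n-2}=-d_3$. Hence $g_2>g_1$ and $|g_3-g_2|\le g_2-g_1$ follow at once, with no unimodality argument, no case split, and no need for your fallback of choosing which end to fold.
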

We are now ready to prove  Theorem \ref{thm-main}.
\begin{proof}
Let $f$ be a positive first $p$-Dirichlet function of $G$ and $m\in V(G)$ be a maximum point of $f$. Let
$$P: v_n\sim v_{n-1}\sim \cdots \sim v_i=m$$
be a shortest path joining a boundary vertex $v_n$ to $m$. Then,
\begin{equation*}
\begin{split}
2n=&\sum_{x\in B(G)}\deg(x)+\sum_{k=i+1}^{n-1}\deg(v_k)+\sum_{x\in \Omega(G)\setminus\{v_{i+1},\cdots,v_{n-1}\}}\deg(x)\\
=& |B(G)|+2(n-i-1)+\sum_{k=i+1}^{n-1}(\deg(v_k)-2)+\sum_{x\in \Omega(G)\setminus\{v_{i+1},\cdots,v_{n-1}\}}\deg(x).
\end{split}
\end{equation*}
So,
\begin{equation}\label{eq-degree}
\begin{split}
\sum_{k=i+1}^{n-1}(\deg(v_k)-2)+\sum_{x\in \Omega(G)\setminus\{v_{i+1},\cdots,v_{n-1}\}}\deg(x)=2(i+1)-|B(G)|\leq 2i+1.
\end{split}
\end{equation}

When $|E(G)|-|E(P)|\geq 3$, we have $i\geq 3$. Let
$$T_{n,3}:u_n\sim u_{n-1}\sim\cdots \sim u_i\sim\cdots\sim u_{3}\sim u_{2}\sim u_1\sim u_3$$
and
$$\wt f(u_k)=\left\{\begin{array}{ll}f(v_k)&i\leq k\leq n\\
f(v_i)&1\leq k<i.
\end{array}\right.$$
Then, by \eqref{eq-degree},
\begin{equation*}
\begin{split}
&\|df\|_{p,G}^p\\
=&\sum_{k=i+1}^{n-1}f^p(v_k)\deg(v_k)+\sum_{x\in \Omega(G)\setminus\{v_{i+1},\cdots,v_{n-1}\}}f^p(x)\deg(x)\\
\leq&2\sum_{k=i+1}^{n-1}f^p(v_k)+f^p(m)\left(\sum_{k=i+1}^{n-1}(\deg(v_k)-2)+\sum_{x\in \Omega(G)\setminus\{v_{i+1},\cdots,v_{n-1}\}}\deg(x)\right)\\
\leq&2\sum_{k=i+1}^{n-1}f^p(v_k)+(2i+1)f^p(m)\\
=&\|\wt f\|_{p,T_{n,3}}^p
\end{split}
\end{equation*}
and
\begin{equation*}
\|df\|_{p,G}^p\geq \sum_{k=i}^{n-1}|f(v_{k+1})-f(v_k)|^p=\sum_{k=i}^{n-1}|\wt f(u_{k+1})-\wt f(u_k)|^p=\|d \wt f\|_{p,T_{n,3}}^p.
\end{equation*}
Hence, by \eqref{eq-min},
\begin{equation}
\lambda_{1,p}(G)=R_{p,G}[f]\geq R_{p,T_{n,3}}[\wt f]> \lambda_{1,p}(T_{n,3}).
\end{equation}
The last inequality is strict because $\wt f$ is not a positive first $p$-Dirichlet eigenfunction of $T_{n,3}$ by Lemma \ref{lem-max-in-head}.

When $|E(G)|-|E(P)|=2$, we have $i=2$. Because $v_2$ is not a boundary vertex and $P$ is a shortest path joining $v_n$ and $v_2$, there is another vertex $v_1$ adjacent to $v_2$. If there is no other vertex of $G$, then the remaining edge of $G$ should be $\{v_1\sim v_j\}$ for some $j=3,4,\cdots, n-1$. Because $P$ is a shortest path joining $v_n$ and $v_2$, we know that $j=3$ or $j=4$.  When $j=3$, $G=T_{n,3}$ and we are done. When $j=4$, $G=T_{n,4}$. By Lemma \ref{lem-comparison-tadpole},
$$\lambda_{1,p}(G)=\lambda_{1,p}(T_{n,4})> \lambda_{1,p}(T_{n,3}).$$
Otherwise, let $v_0$ be another vertex of $G$ which should be one of the end points of the remaining edge. Suppose $v_0$ is adjacent to $v_i$ for some $i=1,2,\cdots,n-1$. When $v_0\sim v_1$, $G=P_{n+1}$. By Lemma \ref{lem-comparison-path},
$$\lambda_{1,p}(G)=\lambda_{1,p}(P_{n+1})>\lambda_1(T_{n,3}).$$
When $v_0\sim v_j$ for some $j=2,3,\cdots,n-1$. Let $G'=G-\{v_0\}$. Then, $G'=P_n$.
Moreover
$$\|df\|_{p,G'}^p= \|df\|_{p,G}^p-f^p(v_j)$$
and
$$\|f\|_{p,G'}^p=\|f\|_{p,G}^p-f^p(v_j).$$
By \eqref{eq-lambda1},
$$\frac{\|df\|_{p,G}^p}{\|f\|_{p,G}^p}=\lambda_{1,p}(G)\leq 1,$$
we have
$$R_{p,G'}[f]=\frac{\|df\|_{p,G}^p-f^p(v_j)}{\|f\|_{p,G}^p-f^p(v_j)}\leq \lambda_{1,p}(G).$$
So, by Lemma \ref{eq-min} and Lemma \ref{lem-comparison-path},
$$\lambda_{1,p}(G')=R_{p,G}[f]\geq R_{p,G'}[\wt f]\geq\lambda_{1,p}(P_n)>\lambda_{1,p}({T_{n,3}}).$$

Finally, when $|E(G)|-|E(P)|=1$, it is clear that $G=P_{n+1}.$ Then, by Lemma \ref{lem-comparison-path},
$$\lambda_{1,p}(P_{n+1})>\lambda_{1,p}(T_{n,3}).$$
This completes the proof of the Theorem.
\end{proof}

\end{document}